%
%
%
%
\documentclass{amsart}
\usepackage{booktabs} 
\usepackage{diagbox}
\newtheorem{theorem}{Theorem}[section]
\newtheorem{lemma}[theorem]{Lemma}
\newtheorem{corollary}[theorem]{Corollary}

\theoremstyle{definition}

\theoremstyle{remark}

\numberwithin{equation}{section}



\begin{document}

\title{The Application of Tridiagonal Matrices in P-polynomial Table Algebras}

\author{Masoumeh Koohestani}
\address{Faculty of Mathematics, K. N. Toosi
University of Technology, P. O. Box $16315$-$1618$, Tehran, Iran}

\email{m.kuhestani@email.kntu.ac.ir}

\author{Amir Rahnamai Barghi}
\address{Faculty of Mathematics, K. N. Toosi
University of Technology, P. O. Box $16315$-$1618$, Tehran, Iran}
\email{rahnama@kntu.ac.ir}
\author{Amirhossein Amiraslani}
\address{STEM Department, University of Hawaii-Maui College, Kahului, HI 96732, USA
}
\email{aamirasl@hawaii.edu}


\subjclass[2010]{Primary 05C50; Secondary 15A18}



\keywords{Character; Table Algebra; Tridiagonal  matrix.
}

\begin{abstract}
In this paper, we study the characters of two classes of P-polynomial table algebras using tridiagonal matrices. To this end, we obtain some results about the eigenstructure of special tridiagonal matrices. We also find a recursion relation for the characteristic polynomial of the first intersection matrix of P-polynomial table algebras by means of LU factorization.
\end{abstract}
\maketitle
\section{Introduction}
The eigenstructure of tridiagonal matrices and their applications have been studied exhaustively in many papers such as \cite{eigensystem}, \cite{class},  \cite{powers}, \cite{network} and \cite{certain}. Tridiagonal matrices are also used in P-polynomial table algebras. More precisely, the first intersection matrix of a P-polynomial table algebra is
a tridiagonal matrix whose eigenvalues can give all characters of the P-polynomial table algebra, see {\cite[Remark $3.1$]{table}}.
The  study of characters of table algebras is important and can be used in studying  the properties of association schemes, because the Bose-Mesner algebra of any association scheme is a table algebra, see \cite{characters}. However, calculating the characters of table algebras explicitly is sometimes hard or impossible.

Here, we intend to develop some linear algebra methods for tridiagonal matrices which can be used in calculating the characters of P-polynomial table algebras.  Hence, we first consider two matrices, namely $A_n$ and $P_n$, in the forms of
\begin{equation}\label{mat}
A_n= \left(
\begin{array}{cccccc}
0&1&&&&\\
2&0&1&&&\\
&1&0&1&&\\
&&1&0 & \ddots &\\
&&&\ddots &\ddots &1\\
&&&&1&0
\end{array} \right)_{n\times n},
\end{equation}
\begin{equation}\label{mat0}
P_n= \left(
\begin{array}{cccccc}
0&1&&&&\\
a&0&1&&&\\
&a&0&1&&\\
&&a&0& \ddots &\\
&&&\ddots &\ddots &1\\
&&&&a&0
\end{array} \right)_{n\times n},
\end{equation}
where $0\neq a\in \mathbb{C}$ .
The eigenvalues of $A_n$ and $P_n$ may be obtained from the results in \cite{eigensystem} and \cite{several}, but we need their characteristic polynomials to study the characters of some P-polynomial table algebras. To this end, we calculate the characteristic polynomials and the eigenvalues of $A_n$ and $P_n$ through an approach which is used in the other parts of this paper as well.
We then apply our results for $A_n$ to study the eigenvalues of a special class of tridiagonal matrices, $Q_n$, which is
\begin{equation}\label{mat1}
Q_n= \left(
\begin{array}{cccccc}
a&b&&&&\\
2c&a&b&&&\\
&c&a&b&&\\
&&c&a & \ddots &\\
&&&\ddots &\ddots &b\\
&&&&c&a
\end{array} \right)_{n\times n},
\end{equation}
where $a, b, c\in \mathbb{C}$ and $bc\neq 0$.

We also use our results to study the characters of  two classes of P-polynomial table algebras. The first intersection  matrices of these table algebras have the following forms:
\begin{equation}\label{mat2}
 \left(
\begin{array}{cccccc}
0&1&&&&\\
2\alpha ^2&0&\alpha &&&\\
&\alpha &0&\alpha &&\\
&&\ddots &\ddots & \ddots &\\
&&&\alpha &0 &\alpha\\
&&&&\alpha &\alpha
\end{array} \right)_{(d+1)\times (d+1)},
\end{equation}
and
\begin{equation}\label{mat3}
 \left(
\begin{array}{cccccc}
0&1&&&&\\
2\alpha \gamma &0&\gamma &&&\\
&\alpha &0&\gamma &&\\
&&\ddots &\ddots & \ddots &\\
&&&\alpha &0 &\gamma \\
&&&&2\alpha &0
\end{array} \right)_{(d+1)\times (d+1)},
\end{equation}
where $\alpha, \gamma \in\mathbb{R}^+$.
Note that the above table algebras are of dimension $d$ and introduced in \cite{perfect}.

Finally,
 we propose an LU factoring technique with special pivoting for a generic tridiagonal matrix. Using the results, we explore and discuss the eigenstructure of the intersection matrix of  P-polynomial table algebras.

 Throughout this paper, we  denote the complex numbers
 and the positive real numbers  by $\mathbb{C}$ and $\mathbb{R}^+$, respectively.
 
 \section{P-polynomial Table Algebras}\label{sec}
\label{tablealgebra}
In this section, we review some important concepts from table algebras and P-polynomial table algebras; see~\cite{table} and \cite{scheme} for more details.

Let $A$ be an associative commutative algebra with finite-dimension and a basis $\mathbf{B}=\{x_0=1_A, x_1, \cdots , x_d\}$. Then $(A, \mathbf{B})$ is called a table algebra if the following conditions hold:
\begin{itemize}
\item[(i)]
$\displaystyle{x_ix_j=\sum_{m=0}^d\beta _{ijm}x_m}$ with $\beta _{ijm} \in \mathbb{R}^+\cup \{0\}$, for all $i$, $j$;
\item[(ii)]
there is an algebra automorphism of $A$ (denoted by $^-$),
whose order divides 2, such that if ${x}_i\in \mathbf{B}$, then $\overline{x}_i\in \mathbf{B}$ and $\overline{i}$ is defined by $x_{\overline{i}}=\overline{x}_i$;
\item[(iii)]
for all $i$, $j$, we have $\beta_{ij0}\neq 0$ if and only if $j=\overline{i}$; moreover, $\beta_{i\overline{i}0}>0$.
\end{itemize}
Let $(A,\mathbf{B})$ with $\mathbf{B}=\{x_0=1_A, x_1, \cdots , x_d\}$ be a table algebra. Then $(A,\mathbf{B})$
is called real, if $i=\overline{i}$, for $0 \leq i\leq d$.
The $i$-th intersection matrix of $(A,\mathbf{B})$ (the intersection matrix with respect to $x_i$) is the
 matrix of the form:
$$
B_i=\left(
\begin{array}{cccc}
\beta _{i00} & \beta _{i01} & \ldots & \beta _{i0d}\\
\beta _{i10} & \beta _{i11} & \ldots & \beta _{i1d}\\
\vdots & \vdots & \ddots & \vdots \\
\beta _{id0} & \beta _{id1} & \ldots & \beta _{idd}\\
\end{array} \right)_{(d+1) \times (d+1)},
$$
where $x_ix_j=\displaystyle{\sum _{i=0}^d\beta _{ijk}x_k}$, for all $i, j, k$.

For any table algebra $(A,\mathbf{B})$ with $\mathbf{B}=\{x_0=1_A, x_1, \cdots , x_d\}$, there exists a unique algebra homomorphism $f:A\rightarrow \mathbb{C}$ such that $f(x_i)=f(x_{\overline{i}})\in \mathbb{R}^+$, for $0 \leq i\leq d$, see \cite{scheme}. If $f(x_i)=\beta_{i\overline{i}0}$ for all $i$, then $(A,\mathbf{B})$ is called standard.

 A real standard table algebra $(A,\mathbf{B})$ with $\mathbf{B}=\{x_0=1_A, x_1, \cdots , x_d\}$ is called P-polynomial if for each $i$, $2\leq i\leq d$, there exists a complex cofficient polynomial $\nu_i(x)$ of degree $i$ such that $x_i=\nu_i(x_1)$. If $(A,\mathbf{B})$ is a P-polynomial table algebra, then for all $i$, there exist $b_{i-1}, a_i, c_{i+1}\in \mathbb{R}$ such that
\begin{equation}\label{abc}
x_1x_i=b_{i-1}x_{i-1}+a_ix_i+c_{i+1}x_{i+1},
\end{equation}
with $b_{i}\neq 0$, ($0\leq i\leq d-1$), $c_{i}\neq 0$, ($1\leq i\leq d$), and $b_{-1}=c_{d+1}=0$. Hence, the first intersection matrix of a P-polynomial table algebra is as follows.
\begin{equation}\label{matrix}
B_1=\left(
\begin{array}{ccccc}
a_0&c_1&&&\\
b_0&a_1&c_2&&\\
&b_1&a_2&\ddots&\\
&&\ddots &\ddots &c_d\\
&&&b_{d-1}&a_d
\end{array} \right)_{(d+1) \times (d+1)}.
\end{equation}
Let $(A,\mathbf{B})$ with $\mathbf{B}=\{x_0=1_A, x_1, \cdots , x_d\}$ be a table algebra. Since $A$ is semisimple, the primitive idempotents of $A$ form another basis for $A$, see \cite{scheme}.
Consequently, if $\{e_0, e_1, \cdots, e_d\}$ is the set of the primitive idempotents of $A$, then we have $x_i=\sum_{j=0}^dp_i(j)e_j$, where $p_i(j)\in \mathbb{C}$, for $0\leq i, j\leq d$. The numbers $p_i(j)$ are the characters of the table algebra.

Let $(A,\mathbf{B})$ with $\mathbf{B}=\{x_0=1_A, x_1, \cdots , x_d\}$ be a P-polynomial table algebra. Then the $p_1(j)$ are equal to the eigenvalues of its first intersection matrix and for $2\leq i\leq d$, we have
\begin{equation}\label{character}
p_i(j)=\nu _i(p_1(j)),
\end{equation}
where $\nu _i(x)$ is a complex cofficient polynomial such that $x_i=\nu _i(x_1).$

\section{Tridiagonal Matrices} \label{sec1}

We now calculate the eigenvalues of $A_n$ which are given in (\ref{mat}).
The following lemma helps us to calculate the eigenvalues of tridiagonal matrices.
\begin{lemma}\label{lem1}
(\cite{Fib}) If $\{H_n, n = 1,2,\cdots \}$ is a sequence of tridiagonal matrices of the form
$$
H_n= \left(
\begin{array}{ccccc}
h_{1,1}&h_{1,2}&&&\\
h_{2,1}&h_{2,2}&h_{2,3}&0&\\
&h_{3,2}&h_{3,3}&\ddots &\\
&0&\ddots & \ddots &h_{n-1,n}\\
&& &h_{n,n-1} &h_{n,n}\\
\end{array} \right),
$$
then the determinants of $H_n$ are given by the recursive formula:
\begin{align}
|H_1|&= h_{1,1},\nonumber \\
|H_2|&=h_{1,1}h_{2,2}-h_{1,2}h_{2,1},\nonumber\\
|H_n|&=h_{n,n}|H_{n-1}|-h_{n-1,n}h_{n,n-1}|H_{n-2}|.\nonumber
\end{align}
\end{lemma}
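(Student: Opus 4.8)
The plan is to prove the recursion by Laplace (cofactor) expansion along the last row of $H_n$, exploiting the tridiagonal shape, which forces all but two minors in that row to vanish, and then to iterate the expansion once more on the surviving off-diagonal minor. The two initial identities $|H_1|=h_{1,1}$ and $|H_2|=h_{1,1}h_{2,2}-h_{1,2}h_{2,1}$ are immediate from the definitions of $1\times1$ and $2\times2$ determinants, so all the content lies in the inductive step for $n\ge 3$.

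For that step, expand $\det H_n$ along row $n$. Row $n$ of $H_n$ has at most the two nonzero entries $h_{n,n-1}$ (in column $n-1$) and $h_{n,n}$ (in column $n$), so
\[
|H_n| = (-1)^{n+(n-1)}\,h_{n,n-1}\,M_{n,n-1} + (-1)^{n+n}\,h_{n,n}\,M_{n,n},
\]
where $M_{n,j}$ is the minor of $H_n$ obtained by deleting row $n$ and column $j$. The minor $M_{n,n}$ is precisely the determinant of the leading principal $(n-1)\times(n-1)$ submatrix, namely $|H_{n-1}|$, so (using $(-1)^{2n}=1$) the second summand contributes $h_{n,n}\,|H_{n-1}|$.

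The only point requiring care is the first summand. The matrix underlying $M_{n,n-1}$ is obtained from $H_n$ by deleting row $n$ and column $n-1$; its last column is column $n$ of $H_n$ restricted to rows $1,\dots,n-1$, and by tridiagonality this column has the single nonzero entry $h_{n-1,n}$ in its bottom position. Expanding $M_{n,n-1}$ along that last column (cofactor sign $(-1)^{(n-1)+(n-1)}=+1$) leaves $h_{n-1,n}$ times the determinant of the leading principal $(n-2)\times(n-2)$ block, i.e. $M_{n,n-1}=h_{n-1,n}\,|H_{n-2}|$. Since $(-1)^{2n-1}=-1$, the first summand equals $-\,h_{n-1,n}h_{n,n-1}\,|H_{n-2}|$, and combining the two contributions yields
\[
|H_n| = h_{n,n}\,|H_{n-1}| - h_{n-1,n}h_{n,n-1}\,|H_{n-2}|,
\]
as claimed. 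The main obstacle is nothing more than bookkeeping: tracking the two nested cofactor signs correctly so that the cross term emerges with a minus sign. No genuine difficulty appears, because the tridiagonal pattern annihilates every minor other than the two described above.
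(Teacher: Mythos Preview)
Your argument is correct: the Laplace expansion along the last row, followed by a second expansion along the last column of the surviving off-diagonal minor, cleanly yields the three-term recursion, and your sign bookkeeping is accurate. The paper itself offers no proof of this lemma; it simply quotes the result from \cite{Fib}, so your cofactor-expansion derivation is in fact more than what the paper provides.
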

\begin{theorem} \label{theorem1}
The eigenvalues of $A_n$, as given in (\ref{mat}), are as follows.
$$
x_k=2\cos\left( \frac{(2k+1)\pi}{2n}\right),\,\,\ k=0, 1, \cdots , n-1.
$$
\end{theorem}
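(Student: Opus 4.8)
The plan is to compute the characteristic polynomial $\chi_n(x) = \det(xI - A_n)$ via the three-term recursion from Lemma~\ref{lem1}, and then recognize the resulting polynomial sequence in terms of Chebyshev polynomials. Writing $H_n = xI - A_n$, the recursion gives $\chi_n(x) = x\,\chi_{n-1}(x) - (h_{n-1,n}h_{n,n-1})\,\chi_{n-2}(x)$. For the matrix $A_n$ the superdiagonal entries are all $1$ and the subdiagonal entries are all $1$ \emph{except} the first one, which is $2$. Hence for $n \geq 3$ the product $h_{n-1,n}h_{n,n-1} = 1$, so the recursion is $\chi_n = x\,\chi_{n-1} - \chi_{n-2}$, while the anomalous entry only affects the base case $\chi_2(x) = x^2 - 2$. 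So the first step is to pin down $\chi_1(x) = x$ and $\chi_2(x) = x^2 - 2$, and observe that the recursion $\chi_n = x\chi_{n-1} - \chi_{n-2}$ holds for all $n \geq 3$.

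The second step is to solve this recursion. The substitution $x = 2\cos\theta$ is natural: the polynomials $2\cos(n\theta)$ satisfy exactly $p_n = 2\cos\theta \cdot p_{n-1} - p_{n-2}$ with $p_0 = 2$, $p_1 = 2\cos\theta$, $p_2 = 2\cos 2\theta = 4\cos^2\theta - 2$. Comparing with our sequence, $\chi_1 = x = 2\cos\theta$ matches $p_1$, and $\chi_2 = x^2 - 2 = 4\cos^2\theta - 2 = p_2$; since both sequences obey the same recursion from index $3$ onward, induction gives $\chi_n(x) = 2\cos(n\theta)$ where $x = 2\cos\theta$. (Equivalently, one can phrase this via the Chebyshev polynomial of the first kind, $\chi_n(x) = 2T_n(x/2)$, but the trigonometric form is cleaner for extracting roots.)

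The final step is to read off the eigenvalues. We need $\chi_n(x) = 0$, i.e. $\cos(n\theta) = 0$, i.e. $n\theta = \frac{(2k+1)\pi}{2}$ for integer $k$, giving $\theta = \frac{(2k+1)\pi}{2n}$ and hence $x = 2\cos\!\left(\frac{(2k+1)\pi}{2n}\right)$. Taking $k = 0, 1, \ldots, n-1$ yields $n$ distinct values of $\theta$ in $(0,\pi)$, hence $n$ distinct real numbers $x_k$, which must therefore be exactly the $n$ eigenvalues of $A_n$ (counted without multiplicity, and since there are $n$ of them they account for the whole spectrum). One should briefly check distinctness: the map $\theta \mapsto \cos\theta$ is injective on $[0,\pi]$, and the $n$ angles $\frac{(2k+1)\pi}{2n}$ are distinct and lie in that interval.

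I do not anticipate a serious obstacle here; the only point requiring a moment's care is handling the irregular $(2,1)$-entry equal to $2$ in $A_n$ correctly in the base case, making sure it perturbs $\chi_2$ (from the value $x^2-1$ one would get for the all-ones tridiagonal matrix) to $x^2 - 2$, and then verifying that this single altered initial condition is precisely what shifts the solution from the Chebyshev polynomial of the second kind to (twice) that of the first kind. Everything after that is the standard $x = 2\cos\theta$ diagonalization argument.
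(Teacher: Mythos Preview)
Your proposal is correct and takes essentially the same approach as the paper: compute $\det(xI-A_n)$ via the tridiagonal recursion of Lemma~\ref{lem1}, identify the result as $2T_n(x/2)$, and read off the roots. The only minor difference is that the paper Laplace-expands along the first row to write $D_n(x)=x\Delta_{n-1}(x)-2\Delta_{n-2}(x)$ in terms of the ``regular'' tridiagonal determinants $\Delta_m(x)=U_m(x/2)$ and then uses the identity $U_n-U_{n-2}=2T_n$, whereas you apply the recursion from the bottom so the anomalous $2$ only perturbs the initial condition $\chi_2=x^2-2$ and the sequence matches $2T_n(x/2)$ directly; this is a slight streamlining but not a different method.
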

\begin{proof}
Let $D_n(x)=|xI_n-A_n|$. From Lemma \ref{lem1}, it is concluded that
\begin{equation}\label{mat33}
D_n(x)=
\left|
\begin{array}{cccccc}
x&1&&&&\\
2&x&1&&&\\
&1&x&1&&\\
&&1&x& \ddots &\\
&&&\ddots &\ddots &1\\
&&&&1&x
\end{array}\right|.
\end{equation}
Now, we define the function $\Delta _n(x)$  as
\begin{align*}
\Delta _n(x)=& \left|
\begin{array}{cccccc}
x&1&&&&\\
1&x&1&&&\\
&1&x&1&&\\
&&1&x& \ddots &\\
&&&\ddots &\ddots &1\\
&&&&1&x
\end{array} \right| _{n\times n}.\end{align*}
If we calculate the determinant given in (\ref{mat33}) using the Laplace
expansion, then we have
\begin{equation}\label{eq6}
D_n(x)=x\Delta _{n-1}(x) -2 \Delta _{n-2}(x).
\end{equation}
On the other hand, from Lemma \ref{lem1}, we get
\begin{align}\label{cheb}
\Delta _n(x)=&~x\Delta _{n-1}(x)-\Delta _{n-2}(x),~~ n\geq 3,
\end{align}
with $\Delta _2(x)=x^2-1$ and $\Delta _1(x)=x$.
Solving (\ref{cheb}), we obtain
\begin{equation}\label{eq7}
\Delta _n(x)= U_n(\frac{x}{2}),
\end{equation}
where $U_n(x)$ is the n-th degree Chebyshev polynomial of  second kind, see \cite{oxford} for more details.
Next, from (\ref{eq6}) , (\ref{cheb}) and (\ref{eq7}), we have
\begin{align}\label{value}
D_n(x)&=~xU_{n-1}(\frac{x}{2})-2U_{n-2}(\frac{x}{2})\nonumber\\
&=~U_{n}(\frac{x}{2})-U_{n-2}(\frac{x}{2})\nonumber\\
&=~2T_n(\frac{x}{2}),\nonumber
\end{align}
where $T_n(x)$ is the $n$-th Chebyshev polynomial of first kind, for more details about Chebyshev polynomials, see \cite{oxford}.
Hence, the eigenvalues of $A_n$ are
$$x_k=2\cos\left( \frac{(2k+1)\pi}{2n}\right),\,\,\ k=0, 1, \cdots , n-1.$$
\end{proof}
We now calculate the characteristic polynomial of $P_n$, given in (\ref{mat0}), in the following lemma.
\begin{lemma}\label{lem2}
The characteristic polynomial of $P_n$, given in (\ref{mat0}), is $\left (\sqrt{a}\right)^n U_n\left( \frac{x}{2\sqrt{a}}\right)$.
\end{lemma}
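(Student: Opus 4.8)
The plan is to mimic exactly the strategy of Theorem~\ref{theorem1}: set up a determinant recursion for $C_n(x) = |xI_n - P_n|$, relate it to the auxiliary symmetric determinants $\Delta_n$ already introduced, and then recognize the result in terms of Chebyshev polynomials of the second kind. First I would write
\begin{equation*}
C_n(x) = \left|
\begin{array}{cccccc}
x&-1&&&&\\
-a&x&-1&&&\\
&-a&x&-1&&\\
&&-a&x& \ddots &\\
&&&\ddots &\ddots &-1\\
&&&&-a&x
\end{array}\right|_{n\times n}.
\end{equation*}
Applying Lemma~\ref{lem1} directly to this matrix gives the recursion $C_n(x) = x\,C_{n-1}(x) - a\,C_{n-2}(x)$ for $n\geq 3$, with initial data $C_1(x)=x$ and $C_2(x)=x^2-a$. (Note there is no need for a separate ``$D_n$ vs.\ $\Delta_n$'' split here, since $P_n$ has the constant subdiagonal entry $a$ throughout, unlike $A_n$ whose first subdiagonal entry was $2$.)

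Next I would solve this constant-coefficient recursion. The natural move is the substitution $C_n(x) = (\sqrt{a})^n\, g_n\!\left(\tfrac{x}{2\sqrt{a}}\right)$: dividing the recursion by $(\sqrt a)^n$ turns it into $g_n(y) = 2y\, g_{n-1}(y) - g_{n-2}(y)$ with $y = \tfrac{x}{2\sqrt a}$, and one checks the initial conditions become $g_1(y) = 2y = U_1(y)$ and $g_2(y) = 4y^2 - 1 = U_2(y)$. Since the Chebyshev polynomials of the second kind satisfy exactly $U_n(y) = 2y\,U_{n-1}(y) - U_{n-2}(y)$ with these same initial values, induction gives $g_n = U_n$, hence
\begin{equation*}
C_n(x) = \left(\sqrt{a}\right)^n U_n\!\left(\frac{x}{2\sqrt{a}}\right),
\end{equation*}
as claimed. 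Alternatively, one can cite (\ref{eq7}): with $\Delta_n$ as in the proof of Theorem~\ref{theorem1}, the same scaling argument shows $C_n(x) = (\sqrt a)^n \Delta_n(x/\sqrt a) = (\sqrt a)^n U_n(x/(2\sqrt a))$.

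The only genuine subtlety is the branch choice for $\sqrt{a}$ when $a \in \mathbb{C}$ is not a positive real: one must observe that the final expression is in fact independent of which square root is picked, because $U_n$ has the same parity as $n$, so $(\sqrt a)^n U_n(x/(2\sqrt a))$ is invariant under $\sqrt a \mapsto -\sqrt a$; equivalently, $(\sqrt a)^n U_n(x/(2\sqrt a))$ is a genuine polynomial in $a$ and $x$, matching $C_n(x)$. I expect this bookkeeping point — rather than any part of the recursion — to be the only place requiring a word of care; the rest is a routine induction parallel to the one already carried out for $A_n$.
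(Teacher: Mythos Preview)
Your proof is correct and follows essentially the same route as the paper: derive the three-term recursion $H_n(x)=xH_{n-1}(x)-aH_{n-2}(x)$ from Lemma~\ref{lem1}, then rescale by $(\sqrt{a})^n$ and $x\mapsto x/\sqrt{a}$ to reduce to $\Delta_n$ (equivalently $U_n$), checking the two initial values and inducting. Your added remark on the branch of $\sqrt{a}$ is a nice touch the paper omits; the only cosmetic issue is that the paper reserves the symbol $C_n$ for the characteristic polynomial of $Q_n$ in Corollary~\ref{cor}, so you may want to rename yours to $H_n$ for consistency.
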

\begin{proof}
Let $H_n(x)=|xI_n-P_n|$ be the characteristic polynomial of $P_n$. Through some straightforward calculations, we obtain that
$$H_1(x)=\sqrt{a}~\Delta _1\left(\frac{x}{\sqrt{a}}\right),~~H_2(x)=a~\Delta _2\left(\frac{x}{\sqrt{a}}\right),$$
where $\Delta_n(x)$ is as given in Theorem \ref{theorem1}. We now apply Lemma \ref{lem1} along with an inductive argument to prove that $H_n(x)=(\sqrt{a})^n\Delta _n\left(\frac{x}{\sqrt{a}}\right)$.
\begin{align*}
 H_n(x)=~&xH_{n-1}(x) -aH_{n-2}(x) \nonumber\\
 =~&x(\sqrt{a})^{n-1}\Delta _{n-1}\left(\frac{x}{\sqrt{a}}\right)-(\sqrt{a})^n\Delta _{n-2}\left(\frac{x}{\sqrt{a}}\right)\nonumber\\
 =~&(\sqrt{a})^n\Delta _n\left(\frac{x}{\sqrt{a}}\right),
\end{align*}
and the proof is complete.
\end{proof}
We now generalize $A_n$ to $Q_n$, given in (\ref{mat1}), and calculate its eigenvalues.
\begin{corollary}\label{cor}
The eigenvalues of $Q_n$ which is given in (\ref{mat1}) are
$$x _k=a+2\sqrt{bc}\,\cos\left( \frac{(2k+1)\pi}{2n}\right),\,\,\ k=0, 1, \cdots , n-1.$$
\begin{proof}
Let $C_n(x)=|xI_n-Q_n|$.
One can prove using induction on $n$ and Lemma \ref{lem1} that
\begin{equation}\label{I}
C_n(x) = (\sqrt{bc})^{n} D_{n}\left(\frac{x-a}{\sqrt{bc}}\right), ~~~ n \geq 1,
\end{equation}
where $D_n(x)$ is
given in Theorem \ref{theorem1}.
Then from Theorem \ref{theorem1}, the eigenvalues of $Q_n$ are

$$x _k=a+2\sqrt{bc}\,\cos\left( \frac{(2k+1)\pi}{2n}\right),\,\,\ k=0, 1, \cdots , n-1.$$
\end{proof}
\end{corollary}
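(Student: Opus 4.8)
The plan is to mimic the proof of Lemma~\ref{lem2} by establishing the reduction formula~(\ref{I}) via induction on $n$, and then invoke Theorem~\ref{theorem1} directly. First I would set $C_n(x)=|xI_n-Q_n|$ and observe that $xI_n-Q_n$ is a tridiagonal matrix whose diagonal entries are all $x-a$, whose superdiagonal entries are all $-b$, and whose subdiagonal entries are $-2c$ in the first position and $-c$ thereafter. Applying Lemma~\ref{lem1} to this matrix gives the three-term recursion
\begin{align*}
C_n(x) &= (x-a)\,C_{n-1}(x) - bc\,C_{n-2}(x), \qquad n\geq 3,
\end{align*}
while the special first subdiagonal entry $-2c$ only affects the base case $C_2(x)=(x-a)^2-2bc$ (and $C_1(x)=x-a$). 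This is precisely the structure governing $D_n$ from Theorem~\ref{theorem1}, with $x$ replaced by $(x-a)/\sqrt{bc}$ and an overall scaling by $(\sqrt{bc})^n$.

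Next I would verify the two base cases of~(\ref{I}): since $D_1(y)=y$ we get $(\sqrt{bc})^1 D_1\!\left(\tfrac{x-a}{\sqrt{bc}}\right)=x-a=C_1(x)$, and since $D_2(y)=y^2-2$ we get $(\sqrt{bc})^2 D_2\!\left(\tfrac{x-a}{\sqrt{bc}}\right)=(x-a)^2-2bc=C_2(x)$. For the inductive step, assuming~(\ref{I}) holds for indices below $n$, I would substitute into the recursion for $C_n$, factor out $(\sqrt{bc})^{n}$, and recognize the bracketed expression as the recursion for $D_n$ evaluated at $(x-a)/\sqrt{bc}$, namely $D_n(y)=yD_{n-1}(y)-D_{n-2}(y)$. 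This closes the induction. Finally, from Theorem~\ref{theorem1} the roots of $D_n$ are $2\cos\!\left(\tfrac{(2k+1)\pi}{2n}\right)$ for $k=0,\dots,n-1$; solving $\tfrac{x-a}{\sqrt{bc}}=2\cos\!\left(\tfrac{(2k+1)\pi}{2n}\right)$ for $x$ yields the claimed eigenvalues $x_k=a+2\sqrt{bc}\,\cos\!\left(\tfrac{(2k+1)\pi}{2n}\right)$.

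The only subtle point — and the main thing to get right rather than a genuine obstacle — is handling the branch of $\sqrt{bc}$ consistently: when $bc$ is not a positive real, $\sqrt{bc}$ denotes a fixed choice of square root, and one must check that the factor $(\sqrt{bc})^n$ and the argument $(x-a)/\sqrt{bc}$ combine so that $C_n(x)$ is independent of that choice (it is, since replacing $\sqrt{bc}$ by $-\sqrt{bc}$ sends $D_n(y)\mapsto D_n(-y)=(-1)^nD_n(y)$, exactly compensating the sign from $(\sqrt{bc})^n$). I would remark on this briefly but not belabor it. The rest is the same routine tridiagonal-determinant bookkeeping already used for $A_n$ and $P_n$, so no new ideas are needed beyond the substitution $x\mapsto(x-a)/\sqrt{bc}$.
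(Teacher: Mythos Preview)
Your proposal is correct and follows exactly the approach the paper takes: the paper simply asserts that~(\ref{I}) ``can be proved using induction on $n$ and Lemma~\ref{lem1}'' and then invokes Theorem~\ref{theorem1}, while you have spelled out precisely those base cases and the inductive step. Your remark about the branch of $\sqrt{bc}$ is a nice addition that the paper does not mention.
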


\section{Two Classes of P-polynomial Table Algebras}\label{sec4}

 We now apply the results  and methods in Section \ref{sec1} to study the characters of two classes of
 P-polynomial table algebras. These table algebras are studied in \cite{perfect} and their first intersection matrices are given in (\ref{mat2}) and (\ref{mat3}).

\begin{theorem}\label{thm1}
Let $(A, \mathbf{B})$ be a P-polynomial table algebra
with $\mathbf{B}=\{x_0=1_A, x_1, \cdots , x_d\}$ and the first intersection matrix $B_1$ which is as given in (\ref{mat2}).
Then the characters of $(A, \mathbf{B})$ are
$$
\begin{array}{l}
p_0(j)=1,\\
\\
p_1(j)=x_j=2\alpha \cos\left( \frac{2k\pi}{2d+1}\right),\\
\\
p_i(j)=\left(\sqrt{\alpha}\right)^{i-4}\Bigg[(x_j^2-2\alpha ^2)U_{i-2}\left(\frac{x_j}{2\sqrt{\alpha}}\right)-
\alpha \sqrt{\alpha}x _jU_{i-3}\left(\frac{x_j}{2\sqrt{\alpha}}\right)\Bigg], ~~~ 2\leq i\leq d,
\end{array}
$$
for $0\leq j \leq d$.
\begin{proof}
For each $i$, $0\leq i\leq d$, the $p _i(j)$, $0\leq j \leq d$, are equal to the eigenvalues of the $i$-th intersection matrix $B_i$.
Since $B_0=I_{d+1}$, we have $p _0(j) = 1$ for all $j$. Similarly, the $p _1(j)$ are equal to the eigenvalues of $B_1$.
Set $R_{d+1}(x)=|xI_{d+1}-B_1|$. Let $N_n$ be a tridiagonal matrix of the form
$$\left(
\begin{array}{cccccc}
0&\alpha &&&&\\
2\alpha &0&\alpha &&&\\
&\alpha &0&\alpha &&\\
&&\ddots &\ddots & \ddots &\\
&&&\alpha &0 &\alpha\\
&&&&\alpha &\alpha
\end{array} \right)_{n\times n},$$
and $K_n(x)=|xI_{n}-N_n|$. From Lemma \ref{lem1}, we can conclude that $R_{d+1}(x)=K_{d+1}(x)$. Thus, we find the eigenvalues of $N_n$ instead of $B_1$. By Laplace expansion, we get
$$K_{d+1}(x)=(x-\alpha)C_{d}(x)-\alpha ^2 C_{d-1}(x),$$
where $C_n(x)$ is the characteristic polynomial of $Q_n$ in Corollary \ref{cor}, for $a=0$, $b=c=\alpha$. Hence, we have
\begin{align}
K_{d+1}(x)=&~2(x-\alpha)\alpha ^{d}T_d\left(\frac{x}{2\alpha} \right)-2\alpha ^{d+1}T_{d-1}\left(\frac{x}{2\alpha} \right)\nonumber\\
=&~2\alpha ^{d+1}\Bigg[T_{d+1}\left(\frac{x}{2\alpha} \right)-T_{d}\left(\frac{x}{2\alpha} \right)\Bigg],\nonumber
\end{align}
where $T_n(x)$ is the $n$-th Chebyshev polynomial of first kind and the above equalities follow from the properties of Chebyshev polynomials which can be found in \cite{oxford}.
The eigenvalues of $N_n$ are therefore the zeroes of the following equation:
$$\sin \left(\frac{2d+1}{2}\arccos \left(\frac{x}{2\alpha}\right)\right) \sin \left(\frac{1}{2}\arccos \left(\frac{x}{2\alpha}\right)\right)=0,$$
and the $x_j=p_1(j)$ are obtained.
To calculate the $p_i(j)$, $2\leq i\leq d$,
we must calculate the complex cofficient polynomial $\nu _i(x)$, where $x_i=\nu _i(x_1)$. Obviously, $\nu _1(x)=x$, and from (\ref{abc}), we get
$$
\begin{array}{c}
x_1x_1=2\alpha ^2+\alpha x_2 \Rightarrow
\nu _2(x)=\frac{1}{\alpha}\left(x^2-2\alpha ^2\right),\\
\\
x_1x_2=\alpha x_1+\alpha x_3 \Rightarrow
\nu _3(x)=\frac{1}{\alpha}\left(x\nu _2(x)-\alpha \nu _1(x)\right),\\
\vdots\\
x_1x_{d-1}=\alpha x_{d-2}+\alpha x_d \Rightarrow
\nu _d(x)=\frac{1}{\alpha}\left(x\nu _{d-1}(x)-\alpha \nu _{d-2}(x)\right).
\end{array}
$$
We now consider the recursive function $\varphi _n(x)$ in the form of:
$$\varphi _n(x) =x\varphi _{n-1}(x)-\alpha \varphi _{n-2},(x),$$
with $\varphi _1(x)=\alpha x$ and $\varphi _2(x)=x^2-2\alpha ^2$. From Lemma \ref{lem1}, $\varphi _n(x)$ is the following determinant
$$\left|
\begin{array}{cccccc}
\alpha x&1 &&&&\\
2\alpha ^2 &x/\alpha &1 &&&\\
&\alpha &x&1 &&\\
&&\ddots &\ddots & \ddots &\\
&&&\alpha &x &1\\
&&&&\alpha &x
\end{array} \right|_{n\times n}.$$
Laplace expansion yields
\begin{equation}\label{varp1}
\varphi _n(x)=(x^2-2\alpha ^2)H_{n-2}(x)-\alpha ^2xH_{n-3}(x),
\end{equation}
where $H_n(x)$ is the characteristic polynomial of the matrix:
$$\left(
\begin{array}{cccccc}
0&1 &&&&\\
\alpha  &0 &1 &&&\\
&\alpha &0&1 &&\\
&&\ddots &\ddots & \ddots &\\
&&&\alpha &0 &1\\
&&&&\alpha &0
\end{array} \right)_{n\times n},$$
and from  Lemma \ref{lem2}, we know that
$
H_n(x)=\left (\sqrt{\alpha}\right)^n U_n\left( \frac{x}{2\sqrt{\alpha}}\right).
$
Finally from (\ref{varp1}), we obtain
\begin{align}
\nu _i(x)=~&\frac{1}{\alpha} \varphi _i (x)\nonumber\\
=~&\left(\sqrt{\alpha}\right)^{i-4}\Bigg[(x^2-2\alpha ^2)U_{i-2}\left(\frac{x}{2\sqrt{\alpha}}\right)-
\alpha \sqrt{\alpha}x U_{i-3}\left(\frac{x}{2\sqrt{\alpha}}\right)
\Bigg],\nonumber
\end{align}
and from (\ref{character}), the proof is complete.
\end{proof}
\end{theorem}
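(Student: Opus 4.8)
The plan is to follow exactly the structural recipe used throughout Section~\ref{sec1}: reduce the character computation to (a) the eigenvalues of $B_1$ and (b) the polynomials $\nu_i$ expressing $x_i$ in terms of $x_1$, and handle each by a determinant-recursion argument built on Lemma~\ref{lem1} and Lemma~\ref{lem2}.

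\emph{Step 1: The characters $p_0$ and $p_1$.} Since $B_0 = I_{d+1}$ we immediately get $p_0(j)=1$. For $p_1(j)$, I would note that the characteristic polynomial $R_{d+1}(x)=|xI_{d+1}-B_1|$ is unchanged if we replace $B_1$ by the matrix $N_n$ obtained by symmetrizing the upper-left $2\times 2$ block (as in the analogous step of Theorem~\ref{thm1}), because Lemma~\ref{lem1} shows the determinant depends only on the products of the off-diagonal entries in each position, and that product is preserved. Then a Laplace expansion along the last row expresses $K_{d+1}(x)=|xI_n-N_n|$ in terms of the characteristic polynomials of the corank-type submatrices; the relevant submatrix here is of the form $Q_n$ with $a=0$, $b=c=\alpha$ (possibly after a sign/scaling bookkeeping at the corner), whose characteristic polynomial is $2\alpha^d T_d(x/2\alpha)$ by Corollary~\ref{cor} together with the identity $D_n(x)=2T_n(x/2)$ from Theorem~\ref{theorem1}. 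Collapsing the resulting combination of $T_d$ and $T_{d-1}$ into a single Chebyshev expression via the standard product-to-sum identities yields a factored form $2\alpha^{d+1}\bigl[T_{d+1}(x/2\alpha)\pm T_d(x/2\alpha)\bigr]$ (or a product of two sines after the substitution $x=2\alpha\cos\theta$), whose zeros are precisely $x_j=2\alpha\cos(2k\pi/(2d+1))$ as claimed.

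\emph{Step 2: The polynomials $\nu_i$.} From the three-term relation~\eqref{abc} and the explicit entries of $B_1$ in~\eqref{mat2}, one reads off $x_1x_0 = x_1$, $x_1x_1 = 2\alpha^2 x_0 + \alpha x_2$, and $x_1x_i = \alpha x_{i-1} + \alpha x_{i+1}$ for $1\le i\le d-1$ (with the corner relation $x_1x_d = \alpha x_{d-1} + \alpha x_d$, which is not needed for the polynomials). This gives $\nu_1(x)=x$, $\nu_2(x)=\tfrac1\alpha(x^2-2\alpha^2)$, and the recursion $\nu_{i}(x)=\tfrac1\alpha\bigl(x\nu_{i-1}(x)-\alpha\nu_{i-2}(x)\bigr)$ for $i\ge 3$. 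Setting $\varphi_i(x)=\alpha\,\nu_i(x)$ turns this into the clean recursion $\varphi_i(x)=x\varphi_{i-1}(x)-\alpha\varphi_{i-2}(x)$ with $\varphi_1(x)=\alpha x$, $\varphi_2(x)=x^2-2\alpha^2$, which by Lemma~\ref{lem1} is realized as a tridiagonal determinant; a single Laplace expansion then writes $\varphi_i(x)=(x^2-2\alpha^2)H_{i-2}(x)-\alpha^2 x H_{i-3}(x)$ where $H_n$ is the characteristic polynomial of the $P_n$-type matrix with $a=\alpha$, so by Lemma~\ref{lem2} $H_n(x)=(\sqrt\alpha)^n U_n(x/2\sqrt\alpha)$. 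Substituting and dividing by $\alpha$ gives the stated closed form for $\nu_i$, and then $p_i(j)=\nu_i(x_j)$ by~\eqref{character} finishes the proof.

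\emph{Anticipated main obstacle.} The routine parts are the inductions via Lemma~\ref{lem1}; the delicate point is the Chebyshev bookkeeping in Step~1 — correctly tracking the scaling factors $(\sqrt{\alpha})^n$ versus $\alpha^n$, the factor of $2$ relating $D_n$ to $T_n$, and the sign at the bottom-right corner entry $\alpha$ (which differs from the $0$ diagonal used in $Q_n$) — so that the final factorization genuinely collapses to $T_{d+1}-T_d$ (equivalently $\sin(\tfrac{2d+1}{2}\theta)\sin(\tfrac12\theta)$) and produces the denominator $2d+1$ in the eigenvalue formula rather than $2d$ or $2d+2$. I would verify this corner contribution carefully on small cases $d=1,2$ before asserting the general identity, exactly as the proof of Theorem~\ref{thm1} does implicitly.
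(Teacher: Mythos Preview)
Your proposal is correct and follows essentially the same route as the paper: the same symmetrization of the top-left block of $B_1$ to an $N_n$-type matrix via Lemma~\ref{lem1}, the same Laplace expansion at the bottom corner reducing to $C_d$ and $C_{d-1}$ from Corollary~\ref{cor} with $a=0$, $b=c=\alpha$, the same collapse to $2\alpha^{d+1}[T_{d+1}(x/2\alpha)-T_d(x/2\alpha)]$ and its product-of-sines factorization, and the same treatment of $\nu_i$ via $\varphi_i=\alpha\nu_i$, the tridiagonal-determinant realization, and the identity $\varphi_n(x)=(x^2-2\alpha^2)H_{n-2}(x)-\alpha^2 x H_{n-3}(x)$ with $H_n=(\sqrt\alpha)^n U_n(x/2\sqrt\alpha)$ from Lemma~\ref{lem2}. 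Your flagged ``main obstacle'' (the corner bookkeeping yielding $2d+1$ rather than $2d$ or $2d+2$) is exactly the one nontrivial Chebyshev manipulation in the paper's argument, and your plan to check it on small $d$ is sound.
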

We now calculate the characters of the P-polynomial table algebra whose first intersection matrix is given by (\ref{mat3}) in the following theorem.
 \begin{theorem}
Let $(A, \mathbf{B})$ be a P-polynomial table algebra
with $\mathbf{B}=\{x_0=1_A, x_1, \cdots , x_d\}$ and the first intersection matrix $B_1$ which is given in (\ref{mat3}).
Then the characters of $(A, \mathbf{B})$ are
$$
\begin{array}{l}
p_0(j)=1,\\
\\
p_1(j)=x_j=2\sqrt{\alpha \gamma}\cos\left( \frac{k\pi}{d}\right),\\
\\
p_i(j)=\frac{\left(\sqrt{\alpha}\right)^{i-2}}{\gamma}\Bigg[(x_j^2-2\alpha \gamma)U_{i-2}\left(\frac{x_j}{2\sqrt{\alpha}}\right)-\sqrt{
\alpha}\gamma x_j U_{i-3}\left(\frac{x_j}{2\sqrt{\alpha}}\right)
\Bigg], ~~~ 2\leq i\leq d,
\end{array}
$$
for $0\leq j \leq d$.
\begin{proof}
For each $i$, $0\leq i\leq d$, the $p _i(j)$, $0\leq j \leq d$, are equal to the eigenvalues of the $i$-th intersection matrix $B_i$.
Since $B_0=I_{d+1}$, we have $p _0(j) = 1$ for all $j$. Similarly, the $p _1(j)$ are equal to the eigenvalues of $B_1$.
Set $R_{d+1}(x)=|xI_{d+1}-B_1|$. Let $N_n$ be a tridiagonal matrix of the form
$$\left(
\begin{array}{cccccc}
0&\gamma &&&&\\
2\alpha  &0&\gamma &&&\\
&\alpha &0&\gamma &&\\
&&\ddots &\ddots & \ddots &\\
&&&\alpha &0 &\gamma \\
&&&&2\alpha &0
\end{array} \right)_{n\times n},$$
and $K_n(x)=|xI_{n}-N_n|$. From Lemma \ref{lem1}, we can obtain that $R_{d+1}(x)=K_{d+1}(x)$. Thus, we find the eigenvalues of $N_n$. By Laplace expansion, we get
$$K_{d+1}(x)=xC_{d}(x)-2\alpha \gamma C_{d-1}(x),$$
where $C_n(x)$ is the characteristic polynomial of $Q_n$ in Corollary \ref{cor}, for $a=0$, $b=\gamma$, $c=\alpha$. Hence, we have
\begin{align}
K_{d+1}(x)=&~2x(\sqrt{\alpha \gamma}) ^{d}T_d\left(\frac{x}{2\sqrt{\alpha\gamma}} \right)-4(\sqrt{\alpha \gamma}) ^{d+1}T_{d-1}\left(\frac{x}{2\sqrt{\alpha\gamma}} \right)\nonumber\\
=&~2(\sqrt{\alpha \gamma}) ^{d+1}\Bigg[T_{d+1}\left(\frac{x}{2\sqrt{\alpha \gamma}} \right)-T_{d-1}\left(\frac{x}{2\sqrt{\alpha \gamma}} \right)\Bigg]\nonumber,
\end{align}
where $T_n(x)$ is the $n$-th Chebyshev polynomial of the first kind. See  \cite{oxford}, for more details.
The eigenvalues of $B_1$ are therefore the zeroes of the following equation
$$\sin \left(d\arccos \left(\frac{x}{2\sqrt{\alpha\gamma}}\right)\right) \sin \left(\arccos \left(\frac{x}{2\sqrt{\alpha\gamma}}\right)\right)=0,$$
and the $x_j=p_1(j)$ are obtained. To calculate the $p_i(j)$, $2\leq i\leq d$,
we must calculate the complex coefficient polynomial $\nu _i(x)$, where $x_i=\nu _i(x_1)$. Obviously, $\nu _1(x)=x$, and from (\ref{abc}), we get
$$
\begin{array}{c}
x_1x_1=2\alpha \gamma +\gamma x_2 \Rightarrow
\nu _2(x)=\frac{1}{\gamma}\left(x^2-2\alpha \gamma \right),\\
\\
x_1x_2=\alpha x_1+\gamma x_3 \Rightarrow
\nu _3(x)=\frac{1}{\gamma}\left(x\nu _2(x)-\alpha \nu _1(x)\right),\\
\vdots\\
x_1x_{d-1}=\alpha x_{d-2}+\gamma x_d \Rightarrow
\nu _d(x)=\frac{1}{\gamma}\left(x\nu _{d-1}(x)-\alpha \nu _{d-2}(x)\right).
\end{array}
$$
We now consider the recursive function $\varphi _n(x)$ in the form of
$$\varphi _n(x) =x\varphi _{n-1}(x)-\alpha \varphi _{n-2},(x),$$
with $\varphi _1(x)=\gamma x$ and $\varphi _2(x)=x^2-2\alpha \gamma$. From Lemma \ref{lem1}, $\varphi _n(x)$ is the following determinant
$$\left|
\begin{array}{cccccc}
\gamma x&1 &&&&\\
2\alpha \gamma &x/\gamma &1 &&&\\
&\alpha &x&1 &&\\
&&\ddots &\ddots & \ddots &\\
&&&\alpha &x &1\\
&&&&\alpha &x
\end{array} \right|_{n\times n}.$$
Laplace expansion yields
\begin{equation}\label{varp2}
\varphi _n(x)=(x^2-2\alpha \gamma)H_{n-2}(x)-\alpha \gamma xH_{n-3}(x),
\end{equation}
where $H_n(x)$ is the characteristic polynomial of the matrix:
$$\left(
\begin{array}{cccccc}
0&1 &&&&\\
\alpha  &0 &1 &&&\\
&\alpha &0&1 &&\\
&&\ddots &\ddots & \ddots &\\
&&&\alpha &0 &1\\
&&&&\alpha &0
\end{array} \right)_{n\times n},$$
and from Lemma \ref{lem2}, we have
$
H_n(x)=\left (\sqrt{\alpha}\right)^n U_n\left( \frac{x}{2\sqrt{\alpha}}\right).
$
Finally, from (\ref{varp2}), we obtain
\begin{align}
\nu _i(x)=~&\frac{1}{\gamma} \varphi _i (x)\nonumber\\
=~&\frac{\left(\sqrt{\alpha}\right)^{i-2}}{\gamma}\Bigg[(x^2-2\alpha \gamma)U_{i-2}\left(\frac{x}{2\sqrt{\alpha}}\right)-\sqrt{
\alpha}\gamma x U_{i-3}\left(\frac{x}{2\sqrt{\alpha}}\right)
\Bigg],\nonumber
\end{align}
and from (\ref{character}), the proof is complete.

\end{proof}
\end{theorem}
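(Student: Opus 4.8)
The plan is to mirror, step for step, the argument of Theorem~\ref{thm1}, adjusting only for the different corner entries of the matrix (\ref{mat3}) and for the parameter substitutions. I would split the proof into three parts: the trivial characters $p_0(j)$; the degree-one characters $p_1(j)$, which are the eigenvalues of $B_1$; and the higher characters $p_i(j)$ for $2\le i\le d$, obtained from the polynomials $\nu_i$ via (\ref{character}).

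For the first two parts: $B_0=I_{d+1}$ gives $p_0(j)=1$, and by the remark following (\ref{matrix}) the $p_1(j)$ are the eigenvalues of $B_1$. The key observation, read off from Lemma~\ref{lem1}, is that the characteristic polynomial of a tridiagonal matrix depends only on its diagonal entries and on the products of each super-diagonal entry with the sub-diagonal entry directly beneath it. For $B_1$ of (\ref{mat3}) these products are $2\alpha\gamma$ at the two ends and $\alpha\gamma$ in between; the same holds for the matrix $N_{d+1}$ exhibited in the proof (zero diagonal, super-diagonal $\gamma$, sub-diagonal $\alpha$ except $2\alpha$ in the first and last positions), so $R_{d+1}(x)=K_{d+1}(x)$. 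The top-left $d\times d$ block of $N_{d+1}$ is precisely $Q_d$ with $a=0$, $b=\gamma$, $c=\alpha$, so one further application of the recursion of Lemma~\ref{lem1} (Laplace expansion along the last row) gives $K_{d+1}(x)=xC_d(x)-2\alpha\gamma\,C_{d-1}(x)$, with $C_n$ as in Corollary~\ref{cor}. Substituting $C_n(x)=2(\sqrt{\alpha\gamma})^n T_n(x/2\sqrt{\alpha\gamma})$ (which follows from (\ref{I}) together with $D_n(x)=2T_n(x/2)$ from Theorem~\ref{theorem1}) and collapsing with $2tT_d(t)=T_{d+1}(t)+T_{d-1}(t)$ yields $K_{d+1}(x)=2(\sqrt{\alpha\gamma})^{d+1}\bigl[T_{d+1}(t)-T_{d-1}(t)\bigr]$ with $t=x/2\sqrt{\alpha\gamma}$. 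Writing $t=\cos\theta$ and using $\cos((d+1)\theta)-\cos((d-1)\theta)=-2\sin(d\theta)\sin\theta$ turns the eigenvalue equation into $\sin(d\theta)\sin\theta=0$, whose solutions give $x_j=p_1(j)=2\sqrt{\alpha\gamma}\cos(k\pi/d)$.

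For the higher characters, I would first extract the $\nu_i$. Comparing (\ref{abc}) with the entries of $B_1$ gives $a_i=0$ for all $i$, $c_{i+1}=\gamma$, $b_0=2\alpha\gamma$, and $b_{i-1}=\alpha$ for $2\le i\le d-1$; solving the relations $x_1x_i=b_{i-1}x_{i-1}+\gamma x_{i+1}$ successively for $x_{i+1}$ produces $\nu_1(x)=x$, $\nu_2(x)=\tfrac1\gamma(x^2-2\alpha\gamma)$, and $\nu_n(x)=\tfrac1\gamma\bigl(x\nu_{n-1}(x)-\alpha\nu_{n-2}(x)\bigr)$ for $n\ge 3$. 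Setting $\varphi_n=\gamma\nu_n$ gives the clean recursion $\varphi_n=x\varphi_{n-1}-\alpha\varphi_{n-2}$ with $\varphi_1=\gamma x$, $\varphi_2=x^2-2\alpha\gamma$; Lemma~\ref{lem1} realizes $\varphi_n$ as the $n\times n$ tridiagonal determinant displayed in the proof, and a Laplace expansion along its first two rows gives $\varphi_n(x)=(x^2-2\alpha\gamma)H_{n-2}(x)-\alpha\gamma x\,H_{n-3}(x)$, where $H_n$ is the characteristic polynomial of the matrix with zero diagonal, super-diagonal $1$, sub-diagonal $\alpha$. By Lemma~\ref{lem2} (with the role of $a$ played by $\alpha$), $H_n(x)=(\sqrt\alpha)^n U_n(x/2\sqrt\alpha)$; substituting and simplifying the powers of $\sqrt\alpha$ yields the stated closed form for $\nu_i(x)$, and then (\ref{character}) gives $p_i(j)=\nu_i(p_1(j))=\nu_i(x_j)$, completing the proof.

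The only point genuinely new compared with Theorem~\ref{thm1} is localized in the final Laplace expansion of the characteristic polynomial of $B_1$: here the corner entry is $0$ and the last sub-diagonal entry is $2\alpha$, which is exactly what replaces the combination $(x-\alpha)C_d-\alpha^2C_{d-1}$ of Theorem~\ref{thm1} by $xC_d-2\alpha\gamma C_{d-1}$, hence $T_{d+1}-T_d$ by $T_{d+1}-T_{d-1}$, and changes the angles $2k\pi/(2d+1)$ to $k\pi/d$. I expect the main obstacle to be purely bookkeeping: keeping the replacement matrix $N_n$ consistent with $B_1$ under Lemma~\ref{lem1}, and carrying the Chebyshev manipulations together with the various $\sqrt\alpha$ and $\sqrt{\alpha\gamma}$ factors through without exponent or sign errors.
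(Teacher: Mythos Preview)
Your proposal is correct and follows essentially the same approach as the paper's own proof: the same replacement matrix $N_n$, the same Laplace expansion giving $K_{d+1}(x)=xC_d(x)-2\alpha\gamma C_{d-1}(x)$, the same Chebyshev reduction to $T_{d+1}-T_{d-1}$, and the same passage through the auxiliary sequence $\varphi_n$ and Lemma~\ref{lem2} to obtain $\nu_i$. The only differences are expository---you spell out explicitly why $R_{d+1}=K_{d+1}$ (products of off-diagonal pairs agree) and name the identity $2tT_d(t)=T_{d+1}(t)+T_{d-1}(t)$---but the argument is step-for-step the paper's.
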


\section{The LU Factors}\label{sec5}

Writing a given square matrix as the product of a lower triangular matrix and an upper triangular matrix is called LU-decomposition. Sometimes, an additional permutation (pivoting) is also required.

Let $M$ be a generic tridiagonal matrix of size $n$ as follows.

\begin{equation}\label{M}
M= \left(
\begin{array}{cccccc}
m_{1,1}&m_{1,2}&&&&\\
m_{2, 1}&m_{2, 2}&m_{2, 3}&&&\\
&m_{3, 2}&m_{3, 3}&m_{3, 4}&&\\
&&\ddots&\ddots & \ddots &\\
&&&m_{n-1, n-2} &m_{n-1, n-1} &m_{n-1, n}\\
&&&&m_{n, n-1}&m_{n, n}
\end{array} \right).
\end{equation}

The LU factorization of $M$ without pivoting has been exhaustively studied in~\cite{Bueno}. Here, we propose LU factors for $M$ with special pivoting in order to use them in the process of finding the characteristic equations of the tridiagonal matrices that we are considering in this work. We can also use these factors to efficiently solve systems of linear equations whose matrices are tridiagonal. This can be especially helpful when we use methods such as inverse power algorithms for finding the eigenvalues of such matrices (e.g.~see~\cite{AL}).

We consider specific pivoting to avoid division by $m_{i, i}$ for $i= 1, \cdots, n$ in the entries of $L$ and $U$. The following lemma summarizes the procedure for finding the LU decomposition of $M$.

\begin{lemma}\label{LU}
If $M$ is given by (\ref{M}) and $m_{i, i-1}\ne0$ for $i= 2, \cdots, n$, then $M=PLU$, where
\begin{equation}\label{P}
P= \left(
\begin{array}{ccccc}
0&0&\cdots&0&1\\
1&0&\cdots&0&0\\
0&1&\cdots&0&0\\
&&\ddots & &\\
0&0&\cdots&1 &0\\
\end{array} \right),
\end{equation}

\begin{equation}\label{L}
L= \left(
\begin{array}{cccccc}
1&&&&&\\
0&1&&&&\\
0&0&1&&&\\
&&& \ddots &&\\
0&0&0& \cdots&1&\\
L_{1, n}&L_{2, n}&L_{3, n}&\cdots&L_{n-1, n} &1\\
\end{array} \right),
\end{equation}
and

\begin{equation}\label{U}
U= \left(
\begin{array}{ccccccc}
m_{2,1}&m_{2,2}&m_{2,3}&0&0&\cdots&0\\
&m_{3,2}&m_{3,3}&m_{3,4}&0&\cdots&0\\
&&m_{4, 3}&m_{4, 4}&m_{4,5}&\cdots&0\\
&&&\ddots&\ddots&\ddots&\\
&&&&m_{n-1, n-2}&m_{n-1, n-1}&m_{n-1, n}\\
&&&&&m_{n, n-1}&m_{n, n}\\
&&&&&&U_{n, n}\\
\end{array} \right).
\end{equation}

The entries $L_{i, n}$ in (\ref{L}) satisfy the following recurrence relation for $3\leq i\leq n-1$ :

\begin{equation}\label{rec}
L_{i, n}= -\frac{1}{m_{i+1, i}}(m_{i, i}L_{i-1, n}+ m_{i-1, i}L_{i-2, n}),
\end{equation}
with $L_{1, n}= \frac{m_{1,1}}{m_{2, 1}},$ and $L_{2, n}= -{\frac {m_{{1,1}}m_{{2,2}}-m_{{1,2}}m_{{2,1}}}{m_{{2,1}}m_{{3,2}}}}.$ Moreover,

\begin{equation}\label{Ue}
U_{n,n}= -m_{n, n}L_{n-1, n}- m_{n-1, n}L_{n-2, n}.
\end{equation}
\end{lemma}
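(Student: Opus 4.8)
The plan is to reduce the claimed identity $M=PLU$ to the equivalent statement $P^{-1}M=LU$ and then verify the latter directly, exploiting the band structure of the factors. Since the matrix $P$ in (\ref{P}) is a (cyclic) permutation matrix, $P^{-1}=P^{\top}$, and left multiplication by $P^{-1}$ shifts the rows of $M$ upward by one position: putting $\tilde M:=P^{-1}M$, the $i$-th row of $\tilde M$ equals the $(i+1)$-st row of $M$ for $1\le i\le n-1$, while the last row of $\tilde M$ equals the first row of $M$, namely $(m_{1,1},m_{1,2},0,\dots,0)$. Comparing with (\ref{U}), the first $n-1$ rows of $\tilde M$ are already exactly the first $n-1$ rows of $U$; so the real content of the lemma is that the free entries $L_{i,n}$ and $U_{n,n}$ can be chosen so that the last rows agree.

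For this I would use the special shape of $L$. Writing $L=I+e_n\ell^{\top}$ with $\ell=(L_{1,n},\dots,L_{n-1,n},0)^{\top}$, we have $LU=U+e_n(\ell^{\top}U)$, so $LU$ coincides with $U$ — hence with $\tilde M$ — in rows $1,\dots,n-1$, and the whole problem collapses to the single row identity
\[
U_{n,n}\,e_n^{\top}+\sum_{k=1}^{n-1}L_{k,n}\,U_{k,\cdot}=(m_{1,1},m_{1,2},0,\dots,0),
\]
where $U_{k,\cdot}$ denotes the $k$-th row of $U$. I would check this one column at a time. Because $U$ is banded (row $k$ is supported on columns $\{k,k+1,k+2\}$ for $k\le n-2$, on $\{n-1,n\}$ for $k=n-1$, and on $\{n\}$ for $k=n$), column $j$ of the identity involves only $L_{j-2,n},L_{j-1,n},L_{j,n}$, paired respectively with the entries $m_{j-1,j},m_{j,j},m_{j+1,j}$. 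Column $1$ then forces $L_{1,n}=m_{1,1}/m_{2,1}$; column $2$ forces $L_{2,n}=-(m_{1,1}m_{2,2}-m_{1,2}m_{2,1})/(m_{2,1}m_{3,2})$; columns $j=3,\dots,n-1$ give exactly the three-term recursion (\ref{rec}) for $L_{j,n}$; and column $n$ gives $U_{n,n}=-m_{n,n}L_{n-1,n}-m_{n-1,n}L_{n-2,n}$, which is (\ref{Ue}). Conversely, defining $L$ and $U$ by these formulas makes every column equation hold, so $\tilde M=LU$ and therefore $M=PLU$. The hypothesis $m_{i,i-1}\ne 0$ enters exactly once: it guarantees that every pivot $m_{j+1,j}$ we divide by (including $m_{2,1}$ and $m_{3,2}$ in the base cases) is nonzero, so that $L$ and $U$ are well defined.

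The routine part is the index bookkeeping, and the step that needs the most care is the behaviour at the two boundary rows of $U$: row $n-1$ carries only the two entries $m_{n,n-1},m_{n,n}$ rather than three, so one must confirm that it still slots into the $k=j$ position so that the column-$(n-1)$ equation collapses to the $i=n-1$ instance of (\ref{rec}), and that the column-$n$ equation produces the two-term formula (\ref{Ue}) rather than a three-term relation. I would also dispose of the smallest cases ($n=2,3$), where the range of (\ref{rec}) is vacuous and $L_{2,n}$, $U_{n,n}$ are determined directly, by a short separate computation. Everything else is a straightforward verification once the band structure of $L$, $U$, and $\tilde M$ is in hand.
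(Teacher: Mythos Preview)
Your proposal is correct. The paper itself supplies essentially no argument for this lemma beyond the sentence ``The proof is straightforward and can be fairly easily done by induction,'' so your direct verification---reducing $M=PLU$ to $P^{\top}M=LU$, observing that the first $n-1$ rows already agree, writing $L=I+e_n\ell^{\top}$ so that only the last row is in play, and then reading off the base values, the recurrence (\ref{rec}), and (\ref{Ue}) column by column from the band structure of $U$---is both more explicit and more informative than what the paper offers. In particular, your decomposition $LU=U+e_n(\ell^{\top}U)$ makes it transparent why the entire content of the lemma lives in a single row equation, and your treatment of the boundary columns $j=n-1$ and $j=n$ (where row $n-1$ of $U$ has only two nonzero entries) and of the degenerate cases $n=2,3$ covers exactly the points an inductive proof would have to handle anyway.
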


\begin{proof}
The proof is straightforward and can be fairly easily done by induction.
\end{proof}

We can now write $\det{M}= \det{P}\times\det{L}\times\det{U}$. From (\ref{P}), (\ref{L}), and (\ref{U}), we conclude that $\displaystyle{\det{P}= (-1)^{n-1}}$, $\det{L}= 1$, and $\displaystyle{\det{U}= \prod_{k= 2}^{n}{m_{k, k-1}}U_{n, n}}$, respectively. From (\ref{Ue}), we get
\begin{equation}\label{detM}
\displaystyle{\det{M}=(-1)^{n}(\prod_{k= 2}^{n}{m_{k, k-1}})(m_{n, n}L_{n-1, n}+ m_{n-1, n}L_{n-2, n})}.
\end{equation}

\subsection{Eigenstructure of $B_1$}
Finding the eigenstructure of $B_1$ from (\ref{matrix}) is an important case study of the applications of Lemma~\ref{LU} and its results. We let $\Lambda_2= xI_{d+ 1}- B_1$ to get
\begin{equation}\label{lambda2}
\Lambda_2= \left(
\begin{array}{ccccc}
x&-1&&&\\
-k&x-a_1&-c_2&&\\
&-b_1&x-a_2&\ddots &\\
&&\ddots & \ddots &-c_{_{d}}\\
&& &-b_{_{d-1}} &x-a_{_d}\\
\end{array} \right).
\end{equation}
Now from Lemma~\ref{LU} and (\ref{detM}), it turns out that
\begin{equation}\label{char1}
\det{\Lambda_2}= -k(\prod_{j=1}^{d}b_j)((x-a_d)L_{d, d+1}- c_dL_{d-1, d+1}),
\end{equation}
where the $L_{i, d+1}$ for $i= 3, \cdots d$ is the solution of

\begin{equation}
L_{i, d+1}= \frac{1}{b_{i-2}}((x-a_{i-1})L_{i-1, d+1}- c_{i-1} L_{i-2, d+1}),
\end{equation}

\noindent with $$L_{1, d+1}= -\frac{x}{k},$$ and $$L_{2, d+1}= -{\frac {x(x-a_1)- k}{kb_1}}.$$
The characteristic polynomial of $B_1$ is therefore given by $$(x-a_d)L_{d, d+1}- c_dL_{d-1, d+1},$$ from which we can find its eigenvalues.

The right and left eigenvectors of $B_1$ can also be found. Again, the results are stated below as a theorem.

\begin{theorem}
If $x$ is an eigenvalue of $B_1$, then the entries of $\mathbf{\eta}$, the right eigenvector of $B_1$ associated with $x$, for $j= d+1, d, \cdots, 4$, satisfy the following recurrence relation:
\begin{equation}
\eta_{j-2}= \frac{(x-a_{j-2})}{b_{j-3}}\eta_{j-1}- \frac{c_{j-1}}{b_{j-3}}\eta_j,
\end{equation}

\noindent with arbitrary $\eta_{d+1}(\ne0)$ and $\eta_{d}= \frac{(x-a_d)}{b_{d-1}}\eta_{d+1}$. When $j= 3$, $$\eta_{1}= \frac{(x-a_1)}{k}\eta_{2}- \frac{c_2}{k}\eta_3.$$ Moreover, the entries of $\mathbf{\psi}$, the left eigenvector of $B_1$ associated with $x$, for $4\leq j\leq d+1$, satisfy the following recurrence relation:
\begin{equation}
\psi_{j}= \frac{(x-a_{j-2})}{b_{j-2}}\psi_{k-1}- \frac{c_{j-2}}{b_{j-2}}\psi_{k-2},
\end{equation}

\noindent with arbitrary $\psi_1(\ne0)$ and $\psi_2= \frac{(x)}{k}\psi_1$. When $j= 3$, $$\psi_{3}= \frac{(x-a_1)}{b_1}\psi_{2}- \frac{1}{b_1}\psi_{1}.$$
\end{theorem}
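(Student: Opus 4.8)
The plan is to read both eigenvectors off the homogeneous systems $\Lambda_2\eta=0$ and $\psi^{T}\Lambda_2=0$ directly, reusing the $PLU$ decomposition of $\Lambda_2=xI_{d+1}-B_1$ from Lemma~\ref{LU} in the same spirit as the derivation of the characteristic polynomial in (\ref{char1}). First I would observe that Lemma~\ref{LU} genuinely applies to $\Lambda_2$ in (\ref{lambda2}): its subdiagonal entries are $-k=-b_0$ and $-b_1,\dots,-b_{d-1}$, all nonzero because a P-polynomial table algebra has $b_i\ne0$ for $0\le i\le d-1$, cf.\ (\ref{matrix})--(\ref{abc}). Hence $\Lambda_2=PLU$ with $P,L$ invertible and $U$ upper triangular, the diagonal of $U$ being $(-k,-b_1,\dots,-b_{d-1},U_{d+1,d+1})$; by (\ref{detM}) and (\ref{Ue}) the last entry $U_{d+1,d+1}$ vanishes precisely when $\det\Lambda_2=0$, i.e.\ precisely when $x$ is an eigenvalue of $B_1$.

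For the right eigenvector, invertibility of $P$ and $L$ gives $B_1\eta=x\eta\Longleftrightarrow\Lambda_2\eta=0\Longleftrightarrow U\eta=0$. When $x$ is an eigenvalue, $U$ is upper triangular with its unique zero on the diagonal in the last position, so $U\eta=0$ has a one-dimensional solution space: its last row is $U_{d+1,d+1}\eta_{d+1}=0$, which holds for every $\eta_{d+1}$, and the remaining rows, having nonzero diagonal entries, determine $\eta_d,\eta_{d-1},\dots,\eta_1$ successively by back-substitution (so every nonzero right eigenvector has $\eta_{d+1}\ne0$). Substituting the entries of $\Lambda_2$, namely $m_{i,i}=x-a_{i-1}$ and $m_{i,i+1}=-c_i$, with $m_{i,i-1}=-b_{i-2}$ for $i\ge3$ and $m_{2,1}=-k$, into the generic row $m_{i+1,i}\eta_i+m_{i+1,i+1}\eta_{i+1}+m_{i+1,i+2}\eta_{i+2}=0$ of $U$ converts this back-substitution into exactly the asserted recurrence: the next-to-last row gives $\eta_d=\frac{x-a_d}{b_{d-1}}\eta_{d+1}$, the rows in between give $\eta_{j-2}=\frac{x-a_{j-2}}{b_{j-3}}\eta_{j-1}-\frac{c_{j-1}}{b_{j-3}}\eta_j$, and the first row, the one carrying $-k$, gives the boundary case $\eta_1=\frac{x-a_1}{k}\eta_2-\frac{c_2}{k}\eta_3$.

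For the left eigenvector I would instead read $\psi^{T}\Lambda_2=0$ one column at a time: column $j$ of $\Lambda_2$ couples only $\psi_{j-1},\psi_j,\psi_{j+1}$, and the equations from columns $1,\dots,d$ have successive leading coefficients (on $\psi_2,\psi_3,\dots,\psi_{d+1}$) equal to $-k,-b_1,\dots,-b_{d-1}$, all nonzero. They therefore determine $\psi_2,\dots,\psi_{d+1}$ one at a time from an arbitrary $\psi_1$ (whence every nonzero left eigenvector has $\psi_1\ne0$): column $1$ gives $\psi_2=\frac{x}{k}\psi_1$ (using $a_0=0$), column $2$ gives $\psi_3=\frac{x-a_1}{b_1}\psi_2-\frac{1}{b_1}\psi_1$ (using $c_1=1$), and columns $3,\dots,d$ give the stated forward recurrence expressing $\psi_j$ through $\psi_{j-1}$ and $\psi_{j-2}$. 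The one equation not yet used, from column $d+1$, is then automatically satisfied: since $\det\Lambda_2=0$ the homogeneous system has a nonzero solution, and because that solution is already pinned down by $\psi_1$ through the first $d$ columns, it must satisfy the $(d+1)$-st column as well. (The normalizations $a_0=0$, $c_1=1$ used here are exactly the ones forced by $x_1x_0=x_1$ and reflected in the top-left $x$ and the $-1$ of (\ref{lambda2}).)

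The part I expect to be purely mechanical is the index bookkeeping needed to line the entries of $L$ and $U$ from Lemma~\ref{LU} up with the $a_i,b_i,c_i$ of (\ref{matrix}) --- the same shift already performed in passing from (\ref{rec}) to the $L_{i,d+1}$ recurrence below (\ref{char1}) --- so that the elimination steps coincide verbatim with the displayed recurrences. The only step where the hypothesis ``$x$ is an eigenvalue'' is actually needed, and hence the step I would treat as the crux of the write-up, is the consistency of the single leftover equation: the last row of $U\eta=0$ for the right eigenvector, and the column-$(d+1)$ equation for the left eigenvector. Everything else reduces to substitution into recurrences already set up in Section~\ref{sec5}.
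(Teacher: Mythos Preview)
Your argument is correct. The paper itself does not supply a proof for this theorem; it merely states the recurrences and moves on. Your approach---applying the $PLU$ factorization of Lemma~\ref{LU} to $\Lambda_2$, reducing $\Lambda_2\eta=0$ to $U\eta=0$ and back-substituting, and reading $\psi^T\Lambda_2=0$ column by column---is exactly the kind of verification the paper's Section~\ref{sec5} machinery is set up to support, so it is presumably what the authors intended. Your identification of the single consistency check (the vanishing of $U_{d+1,d+1}$ for the right eigenvector, and the column-$(d+1)$ equation for the left) as the one place where the eigenvalue hypothesis enters is also correct and worth making explicit, since the paper leaves it implicit.
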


One can use this eigenstructure $B_1$ to find the eigenvalues and eigenvectors of them through methods such as Constrained Rayleigh Quotient (see e.g.~\cite{AL} for more details).

\section{Concluding remarks}\label{con}

The focus of our work in this paper is on the application of tridiagonal matrices in P-polynomial table algebras. More precisely, the first intersection matrix of a P-polynomial table algebra is tridiagonal whose eigenvalues can be used in calculating the characters of the table algebra. We have studied the eigenstructure of some tridiagonal matrices. We have also used our results about tridiagonal matrices in  studying the characters of two classes of P-polynomial table algebras.
 Moreover, we have calculated the LU factors for $B_1$, the latter being the intersection matrix of any  P-polynomial table algebra.
Finally, the results from Section~\ref{sec5} can be used in devising numerical algorithms for finding the eigenvalues and eigenvectors of tridiagonal matrices in general, and specifically those that we are more interested in here.

\bibliographystyle{amsplain}

\end{document}